\documentclass[12pt, reqno]{amsart}

\title[Center of Banach algebra valued Beurling algebras]{Center of Banach algebra valued Beurling algebras}

\makeatletter
\@namedef{subjclassname@2020}{\textup{2020} Mathematics Subject Classification}
\makeatother
\subjclass[2020]{22D15, 43A20, 46M05}

\allowdisplaybreaks[1]


\author[B.~Talwar and R.~Jain]{Bharat Talwar and Ranjana Jain}

\address{Bharat Talwar, Department of  Mathematics, University of Delhi, Delhi}
\email{btalwar.math@gmail.com}

\address{Ranjana Jain, Department of Mathematics, University of Delhi, Delhi}
\email{rjain@maths.du.ac.in}

\thanks{Bharat Talwar is supported by Senior Research Fellowship of CSIR (File number 09/045(1442)/2016-EMR-I)}

\usepackage{relsize}
\usepackage{amsmath,amsfonts,amssymb,amsthm,mathrsfs}
\usepackage{hyperref}
\usepackage{cleveref}
\usepackage[margin=2.9cm]{geometry}
\usepackage{setspace}
\usepackage{xcolor}
\setstretch{1.3}

\numberwithin{equation}{section}
\newtheorem{theorem}{\bf Theorem}[section]
\newtheorem{lemma}[theorem]{\bf Lemma}
\newtheorem{cor}[theorem]{\bf Corollary}
\newtheorem{remark}[theorem]{\bf Remark}

\newtheorem{defin}[theorem]{\bf Definition}


\newcommand{\seq}{\subseteq}

\newcommand{\obp}{\otimes^\gamma}
\newcommand{\oi}{\otimes^\lambda}

\newcommand{\ot}{\otimes}

\newcommand{\Z}{\mathcal{Z}}

\newcommand{\C}{\mathbb{C}}

\newcommand{\mcal}{\mathcal}

\newcommand{\A}{L^1(G,A)}
\newcommand{\B}{L^1_\omega(G,A)}

\newcommand{\CSS}{$C\sp \ast$-algebras}
\newcommand{\BA}{Banach algebra}

\begin{document}
\begin{abstract}
We prove that for a \BA \ $A$ having a bounded $\Z(A)$-approximate identity and for every $\bf[IN]$ group $G$ with weight $w$ which is either constant on conjugacy classes or  $w \geq 1$, $\Z\big(L^1_w(G) \obp A\big) \cong \Z(L^1_w(G)) \obp \Z(A)$.
As an application, we discuss the conditions under which $\Z\big(\B\big)$ enjoys certain Banach algebraic properties, for example, weak amenability, semisimplicity etc.
\end{abstract}
\keywords{vector valued Beurling algebras, Banach space projective tensor product, center, {\bf [IN]} group, weight, weak-amenability.}
\maketitle
\section{Introduction}
For two algebras $A$ and $B$, $\Z(A) \ot \Z(B) = \Z(A \ot B)$, where $\Z(C)$ denotes the center of algebra $C$.
If $A$ and $B$ are \BA s, then it is natural ask whether $\Z(A \obp B)$ is isometrically isomorphic to $\Z(A) \obp \Z(B)$, $\obp$ being the Banach space projective tensor product.
It is known to be true if $A$ and $B$ are \CSS \ (see \cite[Theorem 5.1]{GJ2}) and if $A = B = L^1(G)$ for any $\bf [FC]^-$ group $G$ (see \cite[Lemma 2.1]{Zhang}).
Note that \cite{Zhang} generalizes the results of \cite{Alaghmandan, Azimifard} and in these three articles the major focus was on studying the amenability and weak amenability properties.
The idea behind the proofs given in \cite{Zhang, Azimifard} is to use a projection from $L^1(G)$ onto $\Z(L^1(G))$.
Ingenious construction of one such projection is given in \cite{Zhang} which is somewhat different from the usual averaging technique used while working with $\bf [FIA]^-$ groups.
We used this technique in \cite[Theorem 4.13]{GJT2} and provided an affirmative answer to this question if $A$ is a unital \BA \ and $B = L^1(G)$, for specific classes of groups.
Analogues of a couple of results on $\Z(L^1(G))$ by Mosak \cite{mosak, mosak1} were also obtained in \cite{GJT2} for the center $\Z\big(\A\big)$ of generalized group algebras.

In this article we generalize all the results discussed in the preceding paragraph, giving relatively simpler proofs, by working in a more generalized setting of the $A$-valued Beurling algebras $\B$.
In particular, we drop some restrictions on $G$ and $A$ as imposed in \cite[Lemma 4.4, Theorem 4.7, Theorem 4.13]{GJT2} and  obtain a similar desirable description for $\Z\big(\B\big)$.
After a series of technical and interesting results, we present \Cref{centerdistributewhenweightFC} as main result of this paper which claims that $\Z\big(L^1_w(G) \obp A\big) \cong \Z(L^1_w(G)) \obp \Z(A)$ when $G$ is an $\bf[IN]$ group (a group having a neighbourhood of identity which is invariant under inner automorphisms of $G$),  $A$ has a bounded $\Z(A)$-approximate identity (see  \Cref{Condition(P)}) and weight $w$ is either constant on conjugacy classes or greater than or equal to 1  throughout $G$.
As an application, we discuss a few structure theoretic questions for  $\Z\big(\B\big)$.

\section{Different Identifications of Center}

Let $G$ be a locally compact group with identity $e$.
Then $(G, \mcal{B}, dx)$ is a measure space, where $\mcal{B}$ is the Borel $\sigma$-algebra and $dx$ is the left Haar measure.
Weight $w$ on $G$ is a measurable positive function such that $w(xy) \leq w(x)w(y)$ for every $x,y \in G$.
In view of \cite[Theorem 3.7.5]{Reiter}, $w$ can be assumed to be continuous.
For any \BA \  $A$, consider the set $$\text{L}^1_w(G,A)= \Big\{ f: G \to A : f \text{ is $\mcal{B}$-measurable and }\int_G \| f(x)\|w(x) dx < \infty \Big\}.$$
Let $f,g \in \text{L}^1_w(G,A)$.
Then $(f \ast g)(x) = \int_G f(xy)g(y^{-1}) dy$ and $\|f \|_{w,A} = \int_G \| f(x)\|w(x) dx$ define a multiplication and a seminorm on $\text{L}^1_w(G,A)$, respectively.
When $A  = \C$, we write $\|f\|_{1,w}$ for $\|f \|_{w,A}$.
The set $\B$ of all equivalence classes determined by this seminorm becomes a \BA \ known as $A$-valued Beurling algebra.
As is customary, we will treat the elements of $\B$ as functions.
For any $a \in A$, $x,y \in G$ and $f \in \B$, define $(x \cdot f)(y) = f(x^{-1}y)$, $(f \cdot x)(y)= f(yx)$, $(fa)(x) = f(x)a$ and $(af)(x) = a f(x)$.
It is easy to check that all these elements belong to $\B$.

A part of the following result is proved in \cite[Lemma 2.7]{DedaniaMathToday}.
One can also have a look at \cite[Lemma 3.2, Lemma 3.3]{GJT2}, where an analogous statement is proved for $\A$.
The fact that $w$ is locally bounded \cite[Lemma 1.3.3]{kaniuthbook} along with some necessary changes in that proof can be used to obtain the following.
\begin{lemma}\label{Dedania}
Let $f \in \B$ and $y \in G$.
\begin{enumerate}
\item The maps $G \ni x \to x \cdot f, f \cdot x \in \B$ are continuous.
\item  $\| y \cdot f\|_{w,A} \leq w(y) \|f\|_{w,A}$ and $\| f \cdot y \|_{w,A} \leq w(y^{-1})\Delta(y^{-1}) \|f\|_{w,A}$.
\end{enumerate}
\end{lemma}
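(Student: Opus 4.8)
The plan is to prove part (2) first, since its norm estimates are exactly the ingredient that makes the continuity in part (1) work. For the left translation I would write
\[
\norm{y \cdot f}_{w,A} = \int_G \norm{f(y^{-1}x)}\, w(x)\, dx
\]
and substitute $u = y^{-1}x$; left-invariance of the Haar measure turns this into $\int_G \norm{f(u)}\, w(yu)\, du$, and the submultiplicativity $w(yu) \le w(y)w(u)$ gives at once $\norm{y \cdot f}_{w,A} \le w(y)\norm{f}_{w,A}$. For the right translation I would start from $\norm{f \cdot y}_{w,A} = \int_G \norm{f(xy)}\, w(x)\, dx$ and substitute $u = xy$; here the right-translation change of variables for the left Haar measure produces the modular factor $\Delta(y^{-1})$, yielding $\Delta(y^{-1}) \int_G \norm{f(u)}\, w(uy^{-1})\, du$, after which $w(uy^{-1}) \le w(u)w(y^{-1})$ delivers the claimed bound $w(y^{-1})\Delta(y^{-1})\norm{f}_{w,A}$.

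For part (1) I would run the standard density-plus-$\varepsilon/3$ argument. First I would verify continuity on the dense subspace of compactly supported continuous $A$-valued functions: if $g$ is such a function, supported in a compact set $K$, then $g$ is uniformly continuous, so $g(x^{-1}t) \ra g(x_0^{-1}t)$ uniformly in $t$ as $x \ra x_0$, while the integration effectively takes place over a fixed compact set on which $w$ is bounded (by local boundedness of $w$). This forces $\norm{x \cdot g - x_0 \cdot g}_{w,A} \ra 0$, and the right-translation analogue is identical once one also invokes continuity of $\Delta$.

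To reach a general $f \in \B$, given $\varepsilon > 0$ I would choose such a $g$ with $\norm{f - g}_{w,A} < \varepsilon$ and split
\[
\norm{x \cdot f - x_0 \cdot f}_{w,A} \le \norm{x \cdot (f-g)}_{w,A} + \norm{x \cdot g - x_0 \cdot g}_{w,A} + \norm{x_0 \cdot (g-f)}_{w,A}.
\]
Part (2) bounds the outer terms by $w(x)\varepsilon$ and $w(x_0)\varepsilon$; since $w$ is locally bounded, $w(x)$ stays below a fixed constant as $x$ ranges over a neighbourhood of $x_0$, so both outer terms are uniformly small, while the middle term is small by the previous step. The same scheme handles $x \mapsto f \cdot x$, using the bound $w(y^{-1})\Delta(y^{-1})$ together with local boundedness of both $w$ and $\Delta$.

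The main obstacle will be the bookkeeping of the weight (and, for right translation, of $\Delta$): because translation is not isometric in the weighted, $A$-valued setting, the $\varepsilon/3$ estimate only closes once $w$ is known to be uniformly bounded on a neighbourhood of $x_0$, which is precisely the point at which local boundedness of $w$ (from \cite[Lemma 1.3.3]{kaniuthbook}) is indispensable. The density of the continuous compactly supported functions in $\B$ is routine --- approximate by simple functions and then by continuous ones, using regularity of the Haar measure and local boundedness of $w$ --- and needs no more than the Bochner-integral version of the scalar argument, so the substance of the lemma lies entirely in tracking these multiplicative factors.
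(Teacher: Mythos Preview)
Your proposal is correct and is precisely the argument the paper has in mind: the paper does not write out its own proof but instead refers to \cite[Lemma 2.7]{DedaniaMathToday} and \cite[Lemmas 3.2, 3.3]{GJT2}, noting that the necessary modification is to invoke local boundedness of $w$ from \cite[Lemma 1.3.3]{kaniuthbook}. Your change-of-variable computations for part (2) and your density-plus-$\varepsilon/3$ argument for part (1), with local boundedness of $w$ (and of $\Delta$ on the right) controlling the outer terms, are exactly those referenced proofs carried out in the weighted $A$-valued setting.
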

We first provide an analogue of the characterization of center of convolution algebra as given in \cite[Proposition 1.2]{mosak1}.
If $f$ is a function form $G$ to $A$, we define $\|f\|(x):= \|f(x)\|$ for every $x \in G$.

\begin{lemma}\label{centre1Beur}
Let $G$ be a locally compact group	and $A$ be a Banach algebra.
Then, $$\Z\big(\B\big) =  \{ f \in \B : \Delta(s^{-1})(f\cdot s^{-1})a =  a(s\cdot f ), \forall s \in G, a \in A\}.$$
\end{lemma}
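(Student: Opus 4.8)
The plan is to reduce the verification of centrality to commuting with a convenient total family, namely the elementary functions $\phi a$ with $\phi \in C_c(G)$ and $a \in A$, and then to peel off these test functions to arrive at the stated pointwise identity. Since multiplication in $\B$ is continuous and the linear span of $\{\phi a : \phi \in C_c(G),\, a \in A\}$ is dense in $\B$, an element $f \in \B$ lies in $\Z(\B)$ if and only if $f \ast (\phi a) = (\phi a)\ast f$ for every such $\phi$ and $a$. This replaces the quantifier over all of $\B$ by one over scalar test functions together with the elements of $A$, which is exactly what makes the $A$-module structure visible.

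Next I would compute the two convolutions. A direct check from the definitions gives $f \ast (\phi a) = (f \ast \phi)a$ and $(\phi a)\ast f = a(\phi \ast f)$, where $f \ast \phi$ and $\phi \ast f$ denote the $A$-valued convolutions of $f$ with the scalar function $\phi$. Writing these as averages of translates --- performing the substitution $y \mapsto s^{-1}$, which is where the modular factor $\Delta(s^{-1})$ enters --- one obtains the Bochner integral representations
\[
f \ast \phi = \int_G \phi(s)\,\Delta(s^{-1})\,(f \cdot s^{-1})\, ds, \qquad \phi \ast f = \int_G \phi(s)\,(s \cdot f)\, ds
\]
in $\B$. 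By \Cref{Dedania} the integrands are continuous and compactly supported, so the integrals converge in $\B$, and since left and right multiplication by $a$ are bounded and commute with the Bochner integral, the condition $f \ast (\phi a) = (\phi a)\ast f$ becomes
\[
\int_G \phi(s)\,\Big[\Delta(s^{-1})\,(f \cdot s^{-1})\,a - a\,(s \cdot f)\Big]\, ds = 0 \quad \text{in } \B,
\]
for every $\phi \in C_c(G)$ and $a \in A$.

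Finally, fix $a$ and set $F_a(s) := \Delta(s^{-1})(f\cdot s^{-1})a - a(s\cdot f)$. By \Cref{Dedania}(1), continuity of $\Delta$, and boundedness of the module maps, $F_a \colon G \ra \B$ is continuous. A continuous $\B$-valued function whose integral against every $\phi \in C_c(G)$ vanishes must be identically zero: applying any $\Lambda \in \B^{*}$ reduces this to the scalar statement that $s \mapsto \Lambda(F_a(s))$ integrates to zero against all test functions, hence vanishes, and Hahn--Banach then forces $F_a \equiv 0$. Thus centrality is equivalent to $F_a(s) = 0$ for all $s \in G$ and all $a \in A$, which is precisely the asserted description of $\Z(\B)$; since every step above is an equivalence, both inclusions follow at once. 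I expect the main obstacle to be the careful justification of the vector-integral representations and of pulling the module multiplication through the Bochner integral, together with the bookkeeping that keeps the identities as genuine equalities in $\B$ (equality almost everywhere) while the continuity in $s$ upgrades the conclusion to hold for every $s \in G$.
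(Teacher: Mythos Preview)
Your argument is correct and complete, but it follows a genuinely different route from the paper's.

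For the inclusion $\Z(\B)\subseteq\{\cdots\}$, the paper fixes $s\in G$, $a\in A$, and a shrinking compact symmetric neighbourhood $U$ of $e$; it then compares $f\ast(a\chi_{sU})$ and $(a\chi_{sU})\ast f$ with $m(Us^{-1})(f\cdot s^{-1})a$ and $m(U)\,a(s\cdot f)$, respectively, via explicit integral estimates, and lets $U$ shrink using \Cref{Dedania}. For the reverse inclusion it argues (citing \cite{GJT2}) that $f\ast g-g\ast f$ can be approximated by elements of $\mathrm{span}\{\Delta(s^{-1})(f\cdot s^{-1})a-a(s\cdot f)\}$ for $g\in C_c(G,A)$, essentially a Riemann-sum approximation of the convolution integrals.

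You instead package both directions into a single equivalence by writing $f\ast(\phi a)-(\phi a)\ast f$ as a Bochner integral $\int_G\phi(s)F_a(s)\,ds$ with $F_a$ continuous, and then use the separating family $C_c(G)$ together with Hahn--Banach to conclude that this integral vanishes for all $\phi$ if and only if $F_a\equiv 0$. This is cleaner and more symmetric: it avoids the explicit $\epsilon$-estimates with characteristic functions and the separate treatment of the two inclusions. The paper's approach, on the other hand, is more elementary in that it never needs to justify the Bochner-integral identity $f\ast\phi=\int_G\phi(s)\Delta(s^{-1})(f\cdot s^{-1})\,ds$ in $\B$ (which you correctly flag as the main technical point to be checked), working instead with finite-measure sets and direct norm computations.
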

\begin{proof}
Let $f \in \Z\big(\B\big)$.
For any $a \in A$, $s \in G$ and any compact symmetric set $U \in \mcal{B}$, write $a_{sU} = a \chi_{sU}$.
It follows from local boundedness of $w$ that $a_{sU} \in \B$.
Let $\epsilon >0$.
It suffices to show that $\|\Delta(s^{-1})(f\cdot s^{-1})a -  a(s\cdot f )\|_{w,A} < \epsilon$.
	Following the calculations in the proof of \cite[Theorem 3.4]{GJT2}, we obtain that
	\begin{align*}
	\| m(Us^{-1}) (f \cdot s^{-1}) a - f \ast a_{sU} \|_{w,A} & \leq \int_{U s^{-1}} \int_G \| f \cdot s^{-1} a - (f \cdot x) a \|(y)w(y) dy dx \\
	& \leq m(Us^{-1}) \sup_{x \in U} \| f \cdot s^{-1} - (f \cdot s^{-1}) \cdot x^{-1}\|_{w,A} \|a\|,
	\end{align*}
	
	and
	\begin{align*}
\|m(U) a(s \cdot f) -  a_{sU} \ast f \|_{w,A} &
\leq \Delta(s) \| a\| \int_U \int_G \| (s \cdot f)(y) - f(x^{-1}s^{-1} y) \|w(y) dy dx \\
&\leq  m(U) \|a\| \sup_{x \in U} \| s \cdot f -  s \cdot (x \cdot f)\|_{w,A}\\
& \leq  m(U) \|a\| w(s) \sup_{x \in U} \|f -  (x \cdot f)\|_{w,A}.
	\end{align*}
	So, 
	\begin{align*}
	\| \Delta(s^{-1}) & (f\cdot s^{-1})a - a(s\cdot f ) - \frac{1}{m(U)} (f \ast a_{sU} - a_{sU} \ast f)\|_{w,A} \\& \leq  \big(\Delta(s^{-1})\sup_{x \in U} \| f \cdot s^{-1} - (f \cdot s^{-1}) \cdot x^{-1}\|_{w,A} + w(s) \sup_{x \in U} \| f - x \cdot f\|_{w,A}\big) \|a\|.
	\end{align*}
The result now follows from \Cref{Dedania}.

For the converse, it is sufficient to prove that $f \ast g = g \ast f$ for every continuous function $g$ with compact support.
This can be proved exactly as in \cite[Theorem 3.4]{GJT2}, by proving that in every neighbourhood of $f \ast g - g \ast f$ there is an element which belongs to the  set $\mathrm{span}\{ \Delta(s)^{-1} (f \cdot s^{-1}) a - a(s \cdot f): a \in A, s \in G\}$.
\end{proof}
With this in hand, the following analogue of \cite[Lemma 4.3]{GJT2} can be given with a few adjustments in its proof.

\begin{lemma}\label{centervaluedfunctionsincenterBeur}
Let $G$ be a locally compact group and $A$ be a Banach algebra.
Then,
	\[
	\Z\big(\B\big) \subseteq L^1_w(G, \Z(A)).
	\]
\end{lemma}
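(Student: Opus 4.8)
The plan is to read off pointwise commutation from \Cref{centre1Beur} and then promote the family of almost-everywhere identities it yields---one for each $a\in A$, each valid off an $a$-dependent null set---to the single assertion that $f(x)\in\Z(A)$ for almost every $x$. First I would specialise the description of $\Z\big(\B\big)$ in \Cref{centre1Beur} to $s=e$. As $\Delta(e)=1$ and $e\cdot f=f=f\cdot e$, its defining condition collapses to $fa=af$ in $\B$ for every $a\in A$; evaluating pointwise, for each fixed $a\in A$ there is a null set $N_a\seq G$ with $f(x)a=af(x)$ for all $x\notin N_a$.

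The crux is that $A$ need not be separable, so $\bigcup_{a\in A}N_a$ is an uncountable union and may fail to be null; testing against a countable dense subset of $A$ is therefore not available. I would circumvent this using that $f$, being Bochner integrable, is essentially separably valued: by the Pettis measurability theorem there is a null set $N_0$ and a separable set $R\seq A$ (an essential range of $f$) with $f(x)\in R$ for every $x\notin N_0$. Since $\Z(A)$ is closed, the map $x\mapsto\mathrm{dist}(f(x),\Z(A))$ is measurable and hence $E:=\{x\in G: f(x)\notin\Z(A)\}$ is measurable; the goal is to show $E$ is null.

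To this end I would build a countable family of test elements out of the separable set $R$. For each $r\in R\setminus\Z(A)$ choose $a_r\in A$ with $\norm{r a_r-a_r r}=:2\delta_r>0$; since $s\mapsto sa_r-a_rs$ is continuous, there is an open ball $B_r$ about $r$ on which $\norm{s a_r-a_rs}>\delta_r$. The balls $\{B_r\}_{r\in R\setminus\Z(A)}$ cover the separable, hence Lindel\"of, set $R\setminus\Z(A)$, so countably many of them, say $B_{r_1},B_{r_2},\dots$, already do; put $a_j:=a_{r_j}$ and $\delta_j:=\delta_{r_j}$. Now any $x\in E\setminus N_0$ has $f(x)\in R\setminus\Z(A)$, so $f(x)\in B_{r_j}$ for some $j$ and thus $\norm{f(x)a_j-a_jf(x)}>\delta_j>0$, forcing $x\in N_{a_j}$. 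Hence $E\seq N_0\cup\bigcup_j N_{a_j}$, a countable union of null sets, so $E$ is null and $f(x)\in\Z(A)$ almost everywhere. As passing to the closed subalgebra $\Z(A)$ leaves $\norm{f}_{w,A}$ unchanged, the same $f$ lies in $L^1_w(G,\Z(A))$, which is the claimed inclusion. The only genuinely delicate point is this reduction from uncountably many $a\in A$ to countably many, and the Lindel\"of property of the essential range is exactly what makes it go through.
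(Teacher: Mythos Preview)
Your argument is correct. Both your proof and the paper's begin by specialising \Cref{centre1Beur} to $s=e$ to obtain $fa=af$ in $\B$ for every $a\in A$, but then diverge. The paper proceeds by contradiction: assuming $f(x)\notin\Z(A)$ on a set $E$ of positive finite measure, it observes that $F\mapsto\int_F f(x)w(x)\,dx$ defines a $\Z(A)$-valued vector measure of bounded variation on the Borel subsets of $E$, invokes a Radon--Nikod\'ym theorem from Diestel--Uhl to produce a density $g\in L^1(E,\Z(A))$, and then concludes $fw=g$ a.e.\ on $E$ by uniqueness of densities, contradicting the choice of $E$. Your route is more direct and entirely elementary: you stay at the pointwise level, exploit the essential separability of the range of $f$ to pass to a Lindel\"of set, and extract a countable family of witnesses $a_j$ so that the exceptional set is covered by $N_0\cup\bigcup_j N_{a_j}$. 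Your approach avoids the vector-measure machinery altogether and makes transparent exactly where the non-separability of $A$ is tamed; the paper's approach hides this inside the cited Radon--Nikod\'ym result but has the virtue of packaging the argument as a single appeal to standard integration theory.
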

\begin{proof}
Consider a non-zero element $f$  in $\Z\big(\B\big)$. 
Suppose there exists a Borel set $E$ of positive and finite measure such that $f(x) \notin \Z(A)$ for every $x \in E$.

By \Cref{centre1Beur} we obtain $f a = a f$ for every $a \in A$. 
Let $\mcal{B}_{E}$ denote the $\sigma$-algebra consisting of all Borel sets contained in $E$. 
Then, for any $F \in \mcal{B}_{E}$, we find that
\begin{eqnarray*}
\left( \int_F f(x)w(x)\, dx\right)a &=& \int_F f(x)w(x)a\, dx =  \int_F \big((fa)w\big)(x)\, dx\\
& = & \int_F \big((af)w\big)(x)\, dx = a \left(\int_F f(x)w(x)\, dx\right)
\end{eqnarray*}
for all $a\in A$, i.e., $\int_F f(x)w(x) dx \in \Z(A)$ for all $F \in \mcal{B}_E$.
Define $H: \mcal{B}_E \rightarrow \Z(A)$ by $H(F) = \int_F f(x)w(x) dx$.
Then, $H$ is an $m$-continuous (i.e., $\lim_{m(F) \to 0}H(F) = 0$) vector measure of bounded variation (\cite[Theorem II.2.4]{diestel}).
Thus, by \cite[Corollary III.2.5]{diestel}, there exists a $g \in L^1(E,\Z(A))$ such that $H(F) = \int_F g(x) dx$ for all $F \in \mcal{B}_E$. 
This shows that $\int_F (f(x)w(x) - g(x)) dx = 0$ for every $F \in \mcal{B}_{E}$; so that $fw = g$ a.e. on $E$, by \cite[Corollary II.2.5]{diestel}. 
Since $g(E) \subseteq \Z(A)$ and $w(E) \seq (0, \infty)$, this is a contradiction to the existence of $E$. 
Hence, $f(x) \in \Z(A)$ for almost every $x \in G$.
\end{proof}
\begin{remark}
Unlike in $L^1(G)$ \cite{mosak},  it can be seen from \Cref{centervaluedfunctionsincenterBeur} that for $\Z\big(\B\big)$ to be non-trivial it is not sufficient that $G$ be an ${\bf[IN]}$ group.
This is because $\Z(A)$ might be trivial.
However, if $G$ is an ${\bf[IN]}$ group and $A$ is a \BA \ with non-trivial center, then $\Z\big(\B\big) \neq \{0\}$.
To prove this, choose a compact neighbourhood $E$ of $e$ which is invariant under inner automorphisms.
Then $\chi_E \in \Z(L^1(G))$ \cite{mosak}.
Since $w$ is locally bounded, we have $\chi_E \in L^1_w(G)$.
Using the fact that $G$ is unimodular, we obtain from  \Cref{centre1Beur} that $\chi_Ea \in \Z\big(\B\big)$ for every $a \in \Z(A)$.
\end{remark}
We will now present some necessary conditions for $\Z(\B)$ to be non-trivial.
Note that $G$ being an $\bf [IN]$ group is not a necessity as can be  demonstrated by taking $A$ to be a \BA \ with trivial multiplication, in which case $\Z(\B) \neq \{0\}$ no matter which group $G$ one takes.
To get rid of such pathological examples, mathematicians generally work with \BA s having a bounded approximate identity.
However, we only need the following relaxed condition.

\begin{defin}\label{Condition(P)}
A $\Z(A)$-approximate identity of a \BA \ $A$ is a net $\{a_\alpha\}$ in $A$ such that $a_\alpha a \to a$ for every $a \in \Z(A)$.
If, in addition, the net $\{a_\alpha\}$ is bounded, then we call it a bounded $\Z(A)$-approximate identity.
\end{defin}
Note that  a \BA \ $A$ has a bounded $\Z(A)$-approximate identity if either $A$ or $\Z(A)$ has a left or right bounded approximate identity.
Now onward, the \BA \ $A$ will be assumed to have a bounded $\Z(A)$-approximate identity $\{a_\alpha\}$.
\begin{lemma}\label{PropertyP}
Let $G$ be a locally compact group	and $A$ be a Banach algebra.
If $0 \neq f \in L^1_w(G,\Z(A))$, then the net $\{a_\alpha f\}$ in $\B$ converges to $f$.
\end{lemma}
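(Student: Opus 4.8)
The plan is to exploit the uniform boundedness of the left-multiplication operators $T_\alpha \colon \B \to \B$, $T_\alpha h = a_\alpha h$, together with the density of simple functions, thereby reducing the general statement to an elementary computation. Write $M = \sup_\alpha \|a_\alpha\| < \infty$, which is finite precisely because $\{a_\alpha\}$ is by hypothesis a \emph{bounded} $\Z(A)$-approximate identity. For any $h \in \B$ one has $\|T_\alpha h\|_{w,A} = \int_G \|a_\alpha h(x)\| w(x)\,dx \leq M \|h\|_{w,A}$, so that $\|T_\alpha\| \leq M$ for every $\alpha$.

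First I would verify the claim on the building blocks $f = \chi_E b$, where $b \in \Z(A)$ and $E$ is a Borel set with $\int_E w(x)\,dx < \infty$. Here $(a_\alpha f - f)(x) = \chi_E(x)(a_\alpha b - b)$, whence $\|a_\alpha f - f\|_{w,A} = \|a_\alpha b - b\| \int_E w(x)\,dx$. Since $b \in \Z(A)$, the defining property of the approximate identity gives $a_\alpha b \to b$, and $\int_E w < \infty$, so this quantity tends to $0$. By linearity the same conclusion holds for every $\Z(A)$-valued simple function.

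The key structural input is that such simple functions are dense in $L^1_w(G,\Z(A))$. Since $\Z(A)$ is a closed subalgebra of $A$, and $L^1_w(G,\Z(A))$ is the Bochner space of $\Z(A)$-valued functions over the Radon measure $w(x)\,dx$, the standard approximation theorem for Bochner integrable functions applies. Given $0 \neq f \in L^1_w(G,\Z(A))$ and $\epsilon > 0$, choose a $\Z(A)$-valued simple function $g$ with $\|f - g\|_{w,A} < \epsilon$. Then
\begin{align*}
\|a_\alpha f - f\|_{w,A} &\leq \|a_\alpha(f-g)\|_{w,A} + \|a_\alpha g - g\|_{w,A} + \|g - f\|_{w,A}\\
&\leq (M+1)\epsilon + \|a_\alpha g - g\|_{w,A},
\end{align*}
and the last term tends to $0$ by the previous step. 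Hence $\limsup_\alpha \|a_\alpha f - f\|_{w,A} \leq (M+1)\epsilon$, and letting $\epsilon \to 0$ finishes the argument.

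The main point to watch is that $\{a_\alpha\}$ is a net rather than a sequence, so one cannot simply invoke the dominated convergence theorem starting from the pointwise convergence $a_\alpha f(x) \to f(x)$. The uniform-boundedness-plus-density scheme above is exactly what circumvents this difficulty; the only genuinely non-formal ingredient is the boundedness of the net, which is precisely the hypothesis imposed on $A$ via \Cref{Condition(P)}.
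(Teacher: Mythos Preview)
Your proof is correct and follows the same scheme as the paper's: uniformly bound the operators $h \mapsto a_\alpha h$ by $M$, check convergence on a dense subspace of $L^1_w(G,\Z(A))$, and conclude via the three-term triangle inequality $\|a_\alpha f - f\| \leq \|a_\alpha(f-g)\| + \|a_\alpha g - g\| + \|g-f\|$. The only difference is cosmetic --- you approximate by $\Z(A)$-valued simple functions (standard Bochner-space theory), whereas the paper approximates by finite tensors $\sum_i f_i \otimes a_i \in L^1_w(G) \otimes \Z(A)$ (density cited from \cite{Ebrahim}); since your simple functions $\chi_E b$ are themselves such elementary tensors, the two dense subspaces play identical roles.
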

\begin{proof}
Let $\{a_{\alpha} \}$ be bounded by $M$. 
For any $\epsilon >0$, since $L^1_w(G) \ot \Z(A)$ is dense in $L^1_w(G,\Z(A))$ \cite[Theorem 2.2]{Ebrahim}, there exists $f' = \sum_{i=1}^r  f_i \ot a_i \in L^1_w(G) \ot \Z(A)$ such that $\| f- f' \|_{w,A} < \epsilon$.
As $a_\alpha a_i \to a_i$ for every $1 \leq i \leq r$, we can choose $\alpha$ such that $\| a_\beta a_i - a_i \| \leq \epsilon/( \sum_{i=1}^r \| f_i\|_{1,w})$ for every $1 \leq i \leq r$ and $\beta \geq \alpha$.
For every $\beta \geq \alpha$, 
\begin{align*}
	\| a_\beta f' - f' \|_{w,A} & = \big\| \sum_{i=1}^r f_i \ot (a_\beta a_i - a_i) \big\|_{w,A}  \leq \sum_{i=1}^r \| f_i \|_{1,w} \| a_\beta a_i - a_i \| \\ & \leq \sum_{i=1}^r \| f_i \|_{1,w} \big(\epsilon/( \sum_{i=1}^r \| f_i\|_{1,w})\big) \leq \epsilon.
\end{align*}
Hence,
\begin{align*}
\| a_\beta f - f \|_{w,A} & \leq \| a_\beta f -  a_\beta f' \|_{w,A} + \|a_\beta f' - f' \|_{w,A} + \| f' - f \|_{w,A} \\ & \leq \| a_\beta \| \epsilon +  \epsilon + \epsilon < (M+2)\epsilon.
\end{align*}
This proves the result.
\end{proof}

Techniques of \cite{mosak} are used to prove the latter half of following.
\begin{lemma} \label{centrenew}
Let $G$ be a locally compact group with weight $w$ which is  either constant on conjugacy classes or $w \geq 1$ and $A$ be a Banach algebra.
Then $G$ is an ${\bf[IN]}$ group whenever $\Z\big(\B\big) \neq \{0\}$.
\end{lemma}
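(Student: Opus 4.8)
The plan is to reduce the statement to the classical scalar, unweighted case and then run the argument of \cite{mosak}.

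First I would fix a nonzero $f\in\Z\big(\B\big)$. By \Cref{centervaluedfunctionsincenterBeur} I may regard $f$ as an element of $L^1_w(G,\Z(A))$, so that $f\cdot s^{-1}$ and $s\cdot f$ are again $\Z(A)$-valued and lie in $\B$ by \Cref{Dedania}. Feeding the bounded $\Z(A)$-approximate identity $\{a_\alpha\}$ into \Cref{centre1Beur} gives $\Delta(s^{-1})(f\cdot s^{-1})a_\alpha=a_\alpha(s\cdot f)$ for every $s$ and $\alpha$; since $f$ is $\Z(A)$-valued we have $(f\cdot s^{-1})a_\alpha=a_\alpha(f\cdot s^{-1})$, and \Cref{PropertyP} lets me pass to the limit in $\alpha$ to obtain the scalar-type identity $\Delta(s^{-1})(f\cdot s^{-1})=s\cdot f$ in $\B$. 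Taking pointwise norms and using $\|f\|(x)=\|f(x)\|$, this reads $\Delta(s^{-1})\,\|f\|(ys^{-1})=\|f\|(s^{-1}y)$ a.e., so that $h:=\|f\|$ is a nonzero, nonnegative element of $\Z(L^1_w(G))$. This reduces the whole statement to the scalar weighted algebra.

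Next I would remove the weight using the two hypotheses. If $w\geq 1$, then $\|h\|_{1,w}<\infty$ forces $h\in L^1(G)$, and the pointwise identity above is precisely the condition placing $h$ in $\Z(L^1(G))$. If instead $w$ is constant on conjugacy classes, set $H:=hw$; then $H(sxs^{-1})=\Delta(s)h(x)w(x)=\Delta(s)H(x)$ and $\int_G H=\|h\|_{1,w}<\infty$, so again $0\neq H\in\Z(L^1(G))$. In either case I obtain a nonzero, nonnegative $H\in\Z(L^1(G))$ with the covariance $H(sxs^{-1})=\Delta(s)H(x)$ for a.e.\ $x$ and every $s\in G$. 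Now I would run the Mosak-type argument on $H$. Writing $S_t=\{x:H(x)>t\}$, the covariance yields $s^{-1}S_ts=S_{t/\Delta(s)}$, and hence $m(S_{t/\Delta(s)})=\Delta(s)\,m(S_t)$ for the left Haar measure $m$. Were $\Delta(s)>1$ for some $s$, iterating this relation would force the distribution function $t\mapsto m(S_t)$ to grow like $1/t$ near $0$, contradicting $\int_0^\infty m(S_t)\,dt=\|H\|_1<\infty$; hence $G$ is unimodular and $H(sxs^{-1})=H(x)$ a.e. To convert this almost-everywhere invariance into a genuine invariant neighbourhood, I would replace $H$ by the truncation $H_n:=\min(H,n)$, which is again nonnegative and conjugation invariant, is nonzero for large $n$, and now lies in $L^1(G)\cap L^2(G)$. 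Since the center is a subalgebra stable under the involution of $L^1(G)$, the convolution $g:=H_n\ast H_n^{*}$ lies in $\Z(L^1(G))$; by unimodularity it is continuous with $g(e)=\|H_n\|_2^2>0$, and continuity upgrades the a.e.\ identity of \Cref{centre1Beur} to $g(sxs^{-1})=g(x)$ for all $x,s$. Consequently $V:=\{x:\mathrm{Re}\,g(x)>g(e)/2\}$ is an open neighbourhood of $e$ with $sVs^{-1}=V$ for every $s$, so $G$ is an ${\bf [IN]}$ group.

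The main obstacle is exactly this last passage: the center only provides conjugation invariance almost everywhere, which is too weak to pin down an invariant neighbourhood directly, since null-set ambiguities are amplified when one forms product sets such as $S_tS_t^{-1}$. Manufacturing an honest continuous central function by the $L^2$-truncation followed by the convolution $H_n\ast H_n^{*}$ is what removes this difficulty. Establishing unimodularity beforehand is essential here, for without it the level sets of $H$ fail to be conjugation invariant and the construction collapses; it is precisely the two hypotheses on $w$ that make the transfer to $\Z(L^1(G))$ possible and thereby let the level-set argument deliver $\Delta\equiv 1$.
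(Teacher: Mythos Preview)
Your reduction to the scalar centre matches the paper's: via \Cref{centervaluedfunctionsincenterBeur} and \Cref{PropertyP} one passes from $f$ to $\|f\|\in\Z(L^1_w(G))$. The paper then splits the endgame: for $w\ge 1$ it simply cites \cite{LiukkonenBeur}, while for $w$ constant on conjugacy classes it sets $h(x)=\|f(x)w(x)\|^{1/2}$, so that $h\in L^2(G)$ \emph{directly} with $h(txt^{-1})=\Delta(t)^{1/2}h(x)$, and then $p(s)=\int_G h(sy)h(y)\,dy$ furnishes the invariant compact neighbourhood in one stroke. Your route---first deriving unimodularity from the scaling $m(S_{t/\Delta(s)})=\Delta(s)\,m(S_t)$ of the distribution function, then truncating $H$ to land in $L^1\cap L^2$ before convolving---is a legitimate variant of the Mosak argument, and it has the pleasant feature of treating both hypotheses on $w$ uniformly after the passage to $\Z(L^1(G))$. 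The paper's square-root trick buys an $L^2$ function without a separate unimodularity step, which is why it can be shorter.

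One genuine point needs repair: $\bf[IN]$ requires a \emph{compact} invariant neighbourhood of $e$, but your $V=\{x:\mathrm{Re}\,g(x)>g(e)/2\}$ is only shown to be open and conjugation invariant, and an open invariant neighbourhood alone proves nothing (take $V=G$). What is missing is that $g=H_n\ast H_n^{*}$ vanishes at infinity: since $H_n\in L^2(G)$, $g$ is a coefficient function of the left regular representation and hence lies in $A(G)\subseteq C_0(G)$ (or, more elementarily, approximate $H_n$ in $L^2$ by $C_c(G)$ functions). Then $\{x:g(x)\ge g(e)/2\}$ is compact and invariant, as required. The paper's $p$ is compactly-levelled for exactly the same reason.
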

\begin{proof}
Let  $0 \neq f \in \Z\big(\B\big)$.
For any $x \in G$, $x \cdot f, f \cdot x^{-1} \in L^1_w(G, \Z(A))$, from 
\Cref{centervaluedfunctionsincenterBeur}.
Thus, by \Cref{PropertyP},  \( \Delta(x^{-1})(f \cdot x^{-1})a_\alpha \to \Delta(x^{-1}) f \cdot x^{-1}  \text{ and }   a_\alpha (x \cdot f) \to (x \cdot f).\)
Hence, in $L^1_w(G)$, we have $$\| \Delta(x^{-1}) (f \cdot x^{-1})a_\alpha \| \to \Delta(x^{-1})\| f \cdot x^{-1} \| = \Delta(x^{-1})\| f \| \cdot x^{-1}, \text{ and}$$ $$\| a_\alpha (x \cdot f)\| \to \| (x \cdot f)\| =  x \cdot \| f\|.$$
	
Case(i) $w \geq 1$:
It follows from \Cref{centre1Beur} that $0 \neq \| f\| \in \Z(L^1_w(G))$, which in turn implies that $G$ is an $\bf [IN]$ group, as $w\geq 1$ (see, \cite{LiukkonenBeur}).
	
Case (ii) $w$ is constant on conjugacy classes:
The proof of \cite[Lemma 4.4]{GJT2} works here except for the trivial modifications we now describe.
Put $h(x) = \| f(x) w(x) \|^{1/2}$.
Then using \Cref{centre1Beur} and \Cref{PropertyP}, we obtain $h(txt^{-1}) = \| f(txt^{-1}) w(txt^{-1}) \|^{1/2} = \| (t^{-1} \cdot f \cdot t^{-1})(x) w(x) \|^{1/2} = \| \Delta(t)^{1/2}f(x) w(x) \|^{1/2} = \Delta(t)^{1/2} h(x)$.
Now the continuous function $p(s) = \int_G h(sy)h(y)dy$ will give us a compact neighbourhood of $e$ which is invariant under inner automorphisms.
\end{proof}
The restrictions on weight in  the previous result are not artificial.
In fact, if $G$ is an abelian group with weight $w$, then there is an equivalent weight $\tilde{w} \geq 1$ on $G$ \cite[Lemma 3.2]{DedaniaMathToday} such that  $L^1_w(G)$ and $L^1_{\tilde{w}}(G)$ are isomorphic as \BA s.
Also, every weight on an abelian group is trivially constant on conjugacy classes.
Moreover, if $G$ is a compact group, then $w \geq 1$ \cite[Corollary 1.3.4]{kaniuthbook}.

Before presenting our main results, let us derive some consequences of what we have obtained so far.
In the rest of the article, $\mathrm{Inn}(G)$ shall denote the group of all inner automorphisms ($\mathrm{Ad}_y(x) = y^{-1}xy$) of $G$.
For any function $f$ on $G$, define $(\mathrm{Ad}_y \cdot f)(x) = f(yxy^{-1})$ for every $x,y \in G$.
\begin{lemma}\label{bettercharacterizationofcenter}
Let $G$ be  an $\bf [IN]$ group and $A$ be a Banach algebra.
Then $$\Z\big(\B\big) = \{ f \in \B : \mathrm{Ad}_y \cdot f = f \text{ and } fa = af \  \forall  a \in A, y \in G \}.$$
\end{lemma}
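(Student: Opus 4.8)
The plan is to translate everything to the pointwise level and lean on the characterization of \Cref{centre1Beur}, using that an $\bf[IN]$ group is unimodular (the same fact invoked in the remark following \Cref{centervaluedfunctionsincenterBeur}), so that $\Delta\equiv 1$. With $\Delta\equiv 1$, \Cref{centre1Beur} reads: $f\in\Z(\B)$ if and only if $(f\cdot s^{-1})a=a(s\cdot f)$ for all $s\in G$ and $a\in A$. Writing $S$ for the set on the right-hand side of the asserted identity, I will prove both inclusions by unwinding the operations through $(f\cdot s^{-1})(x)=f(xs^{-1})$, $(s\cdot f)(x)=f(s^{-1}x)$ and $(\mathrm{Ad}_y\cdot f)(x)=f(yxy^{-1})$, together with the fact that left and right translations carry null sets to null sets.

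For $S\subseteq\Z(\B)$, fix $f\in S$, $s\in G$ and $a\in A$. Applying $fa=af$ after the left translation $x\mapsto s^{-1}x$ gives $a(s\cdot f)=(s\cdot f)a$ in $\B$, while $\mathrm{Ad}_s\cdot f=f$ yields $f(s^{-1}x)=f(xs^{-1})$ (substitute $z=s^{-1}x$ in $f(szs^{-1})=f(z)$), i.e. $(s\cdot f)a=(f\cdot s^{-1})a$. Chaining the two equalities produces exactly $a(s\cdot f)=(f\cdot s^{-1})a$, so $f\in\Z(\B)$ by \Cref{centre1Beur}. I note that this direction needs neither the approximate identity nor pointwise membership of $f$ in $\Z(A)$.

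For $\Z(\B)\subseteq S$, fix $f\in\Z(\B)$. By \Cref{centervaluedfunctionsincenterBeur} we have $f\in L^1_w(G,\Z(A))$, and then $fa=af$ is immediate; it remains to establish $\mathrm{Ad}_y\cdot f=f$. Since $s\cdot f$ is again $\Z(A)$-valued, \Cref{centre1Beur} gives $(f\cdot s^{-1})a=a(s\cdot f)=(s\cdot f)a$ for every $a$, so that the element $g:=f\cdot s^{-1}-s\cdot f$ of $L^1_w(G,\Z(A))$ satisfies $ga=0$ for all $a\in A$. The crux, and what I expect to be the only genuine obstacle, is cancelling $a$ to conclude $g=0$: here the bounded $\Z(A)$-approximate identity $\{a_\alpha\}$ is essential. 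Because $g$ is $\Z(A)$-valued its values commute with each $a_\alpha$, whence $a_\alpha g=ga_\alpha=0$, whereas \Cref{PropertyP} forces $a_\alpha g\to g$; therefore $g=0$, that is $f\cdot s^{-1}=s\cdot f$ for all $s$. Reading this pointwise at $s=y$ gives $f(xy^{-1})=f(y^{-1}x)$, and the substitution $x\mapsto xy$ turns it into $f(x)=f(y^{-1}xy)$ for all $y$, which is the required $\mathrm{Ad}$-invariance; thus $f\in S$.
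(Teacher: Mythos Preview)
Your proof is correct and follows essentially the same route as the paper: reduce \Cref{centre1Beur} via unimodularity of an $\mathbf{[IN]}$ group, use \Cref{centervaluedfunctionsincenterBeur} to get $\Z(A)$-valuedness, and then invoke the bounded $\Z(A)$-approximate identity through \Cref{PropertyP} to pass from $ga_\alpha=0$ to $g=0$. The only cosmetic difference is that the paper first repackages the translation identity into the single condition $a(\mathrm{Ad}_y\cdot f)=fa$ (its equation~\eqref{star}) and reads off $fa=af$ by setting $y=e$, whereas you keep the $(f\cdot s^{-1})a=a(s\cdot f)$ form and obtain $fa=af$ from $\Z(A)$-valuedness; both then feed $a=a_\alpha$ into the identity and let \Cref{PropertyP} do the work.
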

\begin{proof}
Note that for every $f \in \B$ we have $(\mathrm{Ad}_y \cdot f )(x)  = (y^{-1} \cdot f \cdot y^{-1})(x)$.
Using \Cref{centre1Beur} and the fact that every $\bf [IN]$ group is unimodular, we obtain
\begin{equation}\label{star}
\Z\big(\B\big) = \{ f \in \B : a (\mathrm{Ad}_y \cdot f) = f a \ \forall a \in A, y \in G \}.
\end{equation}	
	
	If $f \in \B$ is such that $fa = af$ and $\mathrm{Ad}_y \cdot f = f$ for every $y \in G$ and $a \in A$, then it readily follows that $a (\mathrm{Ad}_y \cdot f) = a f = f a$ for every $y \in G$ and $a \in A$.

To prove the other containment, let $0 \neq f \in \Z\big(\B\big)$.
Taking $y=e$ in \Cref{star}, we obtain $fa = af$ for every $a \in A$.

From \Cref{centervaluedfunctionsincenterBeur} it follows that $f, \mathrm{Ad}_y \cdot f \in L^1_w(G,\Z(A))$ for every $y \in G$.
So, by \Cref{PropertyP} we obtain \( 0 =  \big(a_\alpha(\mathrm{Ad}_y \cdot f) - f a_\alpha \big) \to (\mathrm{Ad}_y \cdot f) - f\) for every $y \in G$.
Hence,  $\mathrm{Ad}_y \cdot f = f$ for every $y \in G$.
\end{proof}
\begin{cor}\label{centerBeru2}
Let $G$ be  an $\bf [IN]$ group and $A$ be a Banach algebra.
We have
\begin{align}\label{CenterNice}
\Z\big(\B\big) = \{ f \in L^1_w(G, \Z(A)) : \mathrm{Ad}_y \cdot f = f \ \ \forall y \in G\}.
\end{align}
In particular, if $\Z(A)$ has a bounded approximate identity, then
\begin{align*}
\Z\big(\B\big) = \Z\big(L^1_w(G, \Z(A))\big) \text{ and } \Z\big(L^1_w(G) \obp A\big) \cong \Z\big(L^1_w(G) \obp \Z(A)\big).
\end{align*}
\end{cor}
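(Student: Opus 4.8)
The plan is to read off the identity \eqref{CenterNice} directly from \Cref{bettercharacterizationofcenter} and \Cref{centervaluedfunctionsincenterBeur}, and then to obtain the two ``in particular'' assertions by feeding the commutative algebra $\Z(A)$ back into \eqref{CenterNice}.

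First I would note that any $f \in L^1_w(G,\Z(A))$ takes values in $\Z(A)$ almost everywhere, so that $fa = af$ holds automatically for every $a \in A$. Therefore, for such an $f$, the two defining conditions appearing in \Cref{bettercharacterizationofcenter} reduce to the single condition $\mathrm{Ad}_y \cdot f = f$ for all $y \in G$. Combining this with the inclusion $\Z(\B) \seq L^1_w(G,\Z(A))$ of \Cref{centervaluedfunctionsincenterBeur}, both containments in \eqref{CenterNice} follow: if $f \in \Z(\B)$ then $f \in L^1_w(G,\Z(A))$ and $\mathrm{Ad}_y\cdot f = f$ by \Cref{bettercharacterizationofcenter}, while conversely any $f \in L^1_w(G,\Z(A))$ fixed by every $\mathrm{Ad}_y$ already satisfies $fa = af$ and hence lies in $\Z(\B)$ by the same lemma.

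For the second assertion the crucial observation is that $\Z(A)$ is a commutative \BA, so $\Z(\Z(A)) = \Z(A)$, and a bounded approximate identity of $\Z(A)$ is precisely a bounded $\Z(\Z(A))$-approximate identity in the sense of \Cref{Condition(P)}. Thus, under the hypothesis that $\Z(A)$ has a bounded approximate identity, the coefficient algebra $\Z(A)$ satisfies the standing assumption and \eqref{CenterNice} may be applied with $\Z(A)$ in place of $A$. This gives $\Z\big(L^1_w(G,\Z(A))\big) = \{ f \in L^1_w(G,\Z(A)) : \mathrm{Ad}_y \cdot f = f \ \forall\, y \in G \}$, which is exactly the right-hand side of \eqref{CenterNice} for $A$; hence $\Z(\B) = \Z\big(L^1_w(G,\Z(A))\big)$.

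Finally, the tensor-product statement is obtained by transporting this equality through the classical isometric algebra isomorphisms $L^1_w(G) \obp A \cong \B$ and $L^1_w(G) \obp \Z(A) \cong L^1_w(G,\Z(A))$ identifying a projective tensor product with the associated vector-valued Beurling algebra. Since an isometric isomorphism maps the center onto the center, the equality $\Z(\B) = \Z\big(L^1_w(G,\Z(A))\big)$ just proved yields $\Z\big(L^1_w(G) \obp A\big) \cong \Z\big(L^1_w(G) \obp \Z(A)\big)$. I expect the only genuinely substantive point to be the bootstrapping remark $\Z(\Z(A)) = \Z(A)$ that legitimizes reusing \eqref{CenterNice} for the coefficient algebra $\Z(A)$; the remaining steps are bookkeeping with the lemmas already established and with the standard identification of the projective tensor product.
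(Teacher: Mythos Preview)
Your argument is correct and follows essentially the same route as the paper: the first identity is read off from \Cref{bettercharacterizationofcenter} together with \Cref{centervaluedfunctionsincenterBeur}; the second is obtained by reapplying \eqref{CenterNice} with the commutative algebra $\Z(A)$ in place of $A$, the key point being that a bounded approximate identity of $\Z(A)$ is a bounded $\Z(\Z(A))$-approximate identity; and the tensor-product statement is then a transcription via $L^1_w(G)\obp A\cong\B$.
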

\begin{proof}
First statement is a direct consequence of \Cref{centervaluedfunctionsincenterBeur} and \Cref{bettercharacterizationofcenter}.
The second statement follows from \Cref{centervaluedfunctionsincenterBeur} and the fact that $\Z(A)$ has a bounded $\Z(\Z(A))$-approximate identity.
The third statement is a consequence of the well-known fact that $L^1_w(G) \obp A \cong \B$ \cite[Theorem 2.2]{Ebrahim}.
\end{proof}
Let $G$ be an $\bf [IN]$ group and  $w$ be a weight  which is constant on conjugacy classes.
Just as in \cite{GJT2}, we consider the $\sigma$-subalgebra $\mcal{B}_{\mathrm{inv}}=\{B \in \mcal{B}: \mathrm{Ad}_y(B) = B\ \text{for all } y \in G\}$ and define the corresponding \BA \ $L^1_{w,{\mathrm{inv}}}(G,A)$ arising from $(G, \mcal{B}_{\mathrm{inv}}, dx_{\mathrm{inv}} = dx_{|_{\mcal{B}_{\mathrm{inv}}}})$.
If $f \in L^1_{w,\mathrm{inv}}(G,A)$, then $f$ is $\mcal{B}_{\mathrm{inv}}$-measurable and hence $\mcal{B}$-measurable \cite[Lemma 4.6]{GJT2}.
Clearly, $L^1_{w,\mathrm{inv}}(G,A) \seq \B$.
If $w$ is a weight such that $w \geq 1$, then we may define $$L^1_{w,\mathrm{inv}}(G,A) = \big\{ f \in L^1_w(G, A) \ | \ f \text{ is $\mcal{B}_{\mathrm{inv}}$-measurable} \big\}.$$
In both these cases, from \cite[Lemma 4.6]{GJT2}, we have 
$$L^1_{w,\mathrm{inv}}(G,A) =\{f \in \B:  f \text{ is constant on the conjugacy  classes of G}\}.$$
Note as in \cite[Theorem 2.2]{Ebrahim} that $L^1_{w,\mathrm{inv}}(G,A)  \cong L^1_{w,\mathrm{inv}}(G) \obp A$ for such weights.

\begin{theorem}\label{centerdistributewhenweightFC}
Let $G$ be an $\bf [IN]$ group with a weight $w$ which is either constant on conjugacy classes or $w \geq 1$ and $A$ be a \BA.
Then $$\Z\big(\B\big) \cong \Z(L^1_w(G)) \obp \Z(A).$$
\end{theorem}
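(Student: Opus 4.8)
The plan is to obtain the isometric isomorphism as a chain of three identifications, each of which is already essentially in place from the results preceding the theorem; the substantive work having been done in \Cref{centervaluedfunctionsincenterBeur}, \Cref{bettercharacterizationofcenter} and \Cref{centerBeru2}, the remaining task is purely one of assembling them correctly. The central observation is that the invariance condition $\mathrm{Ad}_y \cdot f = f$ for all $y \in G$ occurring in \eqref{CenterNice} is precisely the assertion that $f$ is constant on the conjugacy classes of $G$, which is the defining feature of the subalgebra $L^1_{w,\mathrm{inv}}$ as recorded just above the theorem.

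First I would apply \Cref{centerBeru2}, in the form of \eqref{CenterNice}, and combine it with the characterization $L^1_{w,\mathrm{inv}}(G,B) = \{f \in L^1_w(G,B) : f \text{ is constant on the conjugacy classes of } G\}$ taken at $B = \Z(A)$. Since $\Z\big(\B\big) = \{f \in L^1_w(G,\Z(A)) : \mathrm{Ad}_y \cdot f = f\ \forall y \in G\}$ and membership in $L^1_{w,\mathrm{inv}}(G,\Z(A))$ is governed by exactly this invariance, the two are the \emph{same} subspace of $L^1_w(G,\Z(A))$, so $\Z\big(\B\big) = L^1_{w,\mathrm{inv}}(G,\Z(A))$ is an equality of normed spaces, hence trivially isometric. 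Next I would invoke the identification $L^1_{w,\mathrm{inv}}(G,B) \cong L^1_{w,\mathrm{inv}}(G) \obp B$ (the analogue of \cite[Theorem 2.2]{Ebrahim} valid for the weights under consideration) with $B = \Z(A)$, giving $L^1_{w,\mathrm{inv}}(G,\Z(A)) \cong L^1_{w,\mathrm{inv}}(G) \obp \Z(A)$. Finally, the scalar instance closes the loop: applying \Cref{centerBeru2} with $A = \C$ (so $\Z(A) = \C$, which certainly possesses a bounded $\Z(\C)$-approximate identity) yields $\Z(L^1_w(G)) = L^1_{w,\mathrm{inv}}(G)$. Splicing the three maps together produces $\Z\big(\B\big) \cong \Z(L^1_w(G)) \obp \Z(A)$.

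Because the heavy lifting sits in the earlier lemmas, the only points that genuinely demand care are two pieces of bookkeeping. The first is verifying that the two descriptions of invariance truly coincide, so that $\Z\big(\B\big)$ and $L^1_{w,\mathrm{inv}}(G,\Z(A))$ are identified as the same subspace and not merely as abstractly isomorphic algebras; this is where the case hypothesis on $w$ (constant on conjugacy classes, or $w \geq 1$) is used, since it is under exactly these hypotheses that $L^1_{w,\mathrm{inv}}$ and the tensor identification are available. The second is checking that the scalar application of \Cref{centerBeru2} is legitimate, i.e. that $\C$ meets the standing bounded $\Z(\cdot)$-approximate identity assumption, which is immediate. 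Since every link in the chain is isometric, the composite isomorphism is isometric, as the statement requires.
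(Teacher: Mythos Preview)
Your overall architecture is the same as the paper's, and the chain
\[
\Z\big(\B\big) \;=\; L^1_{w,\mathrm{inv}}(G,\Z(A)) \;\cong\; L^1_{w,\mathrm{inv}}(G)\obp \Z(A) \;=\; \Z(L^1_w(G))\obp \Z(A)
\]
is exactly what is being proved. However, you are treating as a tautology the one step that actually carries the content, namely the equality $\{f\in L^1_w(G,\Z(A)) : \mathrm{Ad}_y\cdot f = f\ \forall y\} = L^1_{w,\mathrm{inv}}(G,\Z(A))$. The left-hand side is defined by an equality of $L^1$-classes: for each fixed $y$, $f(yxy^{-1})=f(x)$ for almost every $x$, with the exceptional null set a priori depending on $y$. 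Membership in $L^1_{w,\mathrm{inv}}$, on the other hand, requires $f$ to be (or to admit a representative which is) $\mcal{B}_{\mathrm{inv}}$-measurable, i.e.\ genuinely constant on conjugacy classes. Passing from the former to the latter is not automatic for an uncountable group, and this is precisely where the paper does real work: it first establishes the scalar identity $\Z(L^1_w(G)) = L^1_{w,\mathrm{inv}}(G)$ by separate ad hoc arguments in the two cases (for $w$ constant on conjugacy classes via $f\mapsto fw$ and density of $\Z(L^1(G))\cap C(G)$ in $\Z(L^1(G))$; for $w\ge 1$ via the embedding $L^1_w(G)\subseteq L^1(G)$), and then lifts this to the vector-valued case by Pettis' theorem, showing that each $\phi\circ f$ lies in $L^1_{w,\mathrm{inv}}(G)$ so that $f$ is weakly, hence strongly, $\mcal{B}_{\mathrm{inv}}$-measurable.

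Your step ``applying \Cref{centerBeru2} with $A=\C$ yields $\Z(L^1_w(G)) = L^1_{w,\mathrm{inv}}(G)$'' suffers from the same circularity: \eqref{CenterNice} only gives $\Z(L^1_w(G))=\{f\in L^1_w(G):\mathrm{Ad}_y\cdot f=f\ \forall y\}$, and you are again silently identifying this with $L^1_{w,\mathrm{inv}}(G)$. So the two ``bookkeeping'' points you flag are not bookkeeping at all; they are the theorem. What is missing from your proposal is exactly the measurability argument (Pettis in the vector case, and an independent argument in the scalar case) that turns a.e.\ conjugation-invariance into $\mcal{B}_{\mathrm{inv}}$-measurability.
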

\begin{proof}
We first claim that $\Z(L^1_w(G)) = L^1_{w,\mathrm{inv}}(G)$.
	
Let us first assume that $w$ is constant on conjugacy classes.	
We know that $\Z(L^1(G))$ has a uniformly bounded approximate identity -see proof of \cite[Corollary 1.6]{LiukkonenGroup}.
As convolution of an $L^1$ function with an $L^\infty$ function gives a continuous function, $\Z(L^1(G)) \cap C(G)$ is dense in $\Z(L^1(G))$.
For a fixed $f \in \Z(L^1_w(G))$, $fw \in \Z(L^1(G))$ \cite{mosak1}, so, $fw$ is approximated in $L^1(G)$ by a sequence $\{g_n\} \in \Z(L^1(G)) \cap C(G)$.
Since both $g_n$ and $w$ are constant on conjugacy classes, we obtain that $f$ is constant on conjugacy classes.
So, $f \in L^1_{w,\mathrm{inv}}(G)$, proving that $\Z(L^1_w(G)) \seq L^1_{w,\mathrm{inv}}(G)$.
It now follows from \Cref{centerBeru2} that $\Z(L^1_w(G)) = L^1_{w,\mathrm{inv}}(G)$.

If $w\geq 1$, then $L^1_{w, \mathrm{inv}}(G) \seq L^1_w(G) \seq  L^1(G)$.
Thus, being constant on conjugacy classes, every element of  $L^1_{w, \mathrm{inv}}(G)$ is contained in  $\Z(L^1(G))$.
This further implies that $L^1_{w, \mathrm{inv}}(G)\seq \Z(L^1_w(G))$.
Conversely, if $f \in \Z(L^1_w(G))$, then $f \in L^1_w(G)$ and $f \cdot x = x^{-1} \cdot f$ for every $x \in G$ since $G$ is unimodular.
This proves that $f \in \Z(L^1(G))$ and hence $f$ is constant on conjugacy classes, proving the claim.

Thus, in both the cases, $$L^1_{w,\mathrm{inv}}(G,\Z(A)) \cong L^1_{w,\mathrm{inv}}(G) \obp \Z(A) \cong \Z(L^1_w(G)) \obp \Z(A).$$ 
From \Cref{centerBeru2}, $L^1_{w, \mathrm{inv}}(G,\Z(A)) \seq \Z\big(\B\big)$, so we only need to  check the reverse containment.
Further, since an arbitrary $f \in \Z\big(\B\big)$ is a member of $L^1_w(G,\Z(A))$, it is sufficient to show that $f$ is $\mcal{B}_{\mathrm{inv}}$-measurable.

As $f$ is $\mcal{B}$-essentially separable valued \cite[Proposition 2.15]{ryan}, there exists $E \in \mcal{B}$ with zero measure such that $f(E^c)$ is contained in a separable space.
We can redefine $f$  to be zero on $E$
and hence $f$ is $\mcal{B}_{\mathrm{inv}}$-essentially separable valued.
In view of \Cref{centerBeru2}, for every $\phi \in A^\ast$ and $y \in G$ we have $Ad_y \cdot (\phi \circ f)(x) = (\phi \circ f)(yxy^{-1}) = \phi (f(yxy^{-1})) = \phi (f(x))$ for almost every $x \in G$.
Thus, $\phi \circ f \in \Z(L^1_w(G)) = L^1_{w,\mathrm{inv}}(G)$ and hence $f$ is weakly $\mcal{B}_{\mathrm{inv}}$-measurable.
It now follows from \cite[Proposition 2.15]{ryan} that $f$ is $\mcal{B}_{\mathrm{inv}}$-measurable.
\end{proof}
The following result generalizes \cite[Theorem 4.13]{GJT2} (by taking  $w \equiv 1$).
It also generalizes \cite[Lemma 2.1]{Zhang} fully and \cite[Proposition 2.2]{Zhang} to some extent.
\begin{cor}
Let $G$ be an $\bf [IN]$ group and $A$ be a \BA.
	Then, $$\Z\big(\A\big) \cong \Z(L^1(G)) \obp \Z(A).$$
\end{cor}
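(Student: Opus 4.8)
The plan is to obtain this statement as an immediate specialization of \Cref{centerdistributewhenweightFC} to the trivial weight $w \equiv 1$. The first thing I would verify is that the constant function $w \equiv 1$ is a legitimate weight on $G$: it is measurable and strictly positive, and it is submultiplicative since $w(xy) = 1 = w(x) w(y)$ for all $x, y \in G$. Furthermore, this weight satisfies \emph{both} of the disjunctive hypotheses in \Cref{centerdistributewhenweightFC} simultaneously, as it is globally constant (hence in particular constant on conjugacy classes) and also satisfies $w \geq 1$. Thus \Cref{centerdistributewhenweightFC} is applicable under the sole assumption that $G$ is an ${\bf[IN]}$ group.

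Next I would observe that for $w \equiv 1$ the weighted spaces reduce to the ordinary ones: by definition the seminorm $\|f\|_{w,A} = \int_G \|f(x)\| w(x)\, dx$ becomes the usual $L^1$-norm, so that $\B = \A$ and, in the scalar case, $L^1_w(G) = L^1(G)$. Substituting these identifications into the conclusion $\Z\big(\B\big) \cong \Z(L^1_w(G)) \obp \Z(A)$ of \Cref{centerdistributewhenweightFC} yields precisely $\Z\big(\A\big) \cong \Z(L^1(G)) \obp \Z(A)$, which is the desired isometric isomorphism.

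Since the corollary is a pure specialization, I do not anticipate any genuine obstacle; all of the analytic content, together with the standing assumption that $A$ has a bounded $\Z(A)$-approximate identity, has already been discharged in the main theorem. The only point meriting explicit mention is the elementary verification that the constant weight meets the hypothesis on $w$, which is exactly what licenses the invocation of \Cref{centerdistributewhenweightFC}.
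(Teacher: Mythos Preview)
Your proposal is correct and is exactly the approach the paper takes: the corollary is stated without a separate proof, and the surrounding text indicates it is obtained from \Cref{centerdistributewhenweightFC} simply by taking $w \equiv 1$. Your verification that the constant weight satisfies the hypotheses and that the weighted spaces then reduce to the unweighted ones is precisely the intended specialization.
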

We note the following interesting consequences of \Cref{centerdistributewhenweightFC} without giving the definitions of the concepts discussed as they are not being used rigorously.
We would also like to point out that (2) and (3) below generalize \cite[Proposition 2.3]{Zhang}.
\begin{cor}\label{consequences}
Let $G$ be an $\bf [IN]$ group with weight $w \geq 1$ and $A$ be a \BA. Then, 
\begin{enumerate}
\item $\Z\big(\B\big)$ is semisimple if and only if $\Z(A)$ is semisimple.
\item If $\Z(L^1_w(G))$ and $\Z(A)$ are weakly amenable, then so is $\Z\big(\B\big)$.
\item If $\Z\big(\B\big)$ is weakly amenable and semisimple, then both $\Z(L^1_w(G))$ and $\Z(A)$ are weakly amenable.
\item If $\Z\big(\B\big)$ is Tauberian and semisimple, then  both $\Z(L^1_w(G))$ and $\Z(A)$ are Tauberian.
\item If both $\Z(L^1_w(G))$ and $\Z(A)$ are Tauberian then so is $\Z\big(\B\big)$.
\item $\Z(\B)$ is regular if and only if both $\Z(L^1_w(G))$ and $\Z(A)$ are regular.
\item  $\Z\big(\B\big) = \{ f \in L^1_w(G,\Z(A)): f \text{ is constant on conjugacy classes}\}.$
\end{enumerate}
\end{cor}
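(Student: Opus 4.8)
The plan is to reduce every one of the seven assertions to a statement about the projective tensor product of two \emph{commutative} Banach algebras and then feed it into the known transfer results for such products. By \Cref{centerdistributewhenweightFC} we have the isometric isomorphism $\Z(\B) \cong \Z(L^1_w(G)) \obp \Z(A)$, and both factors, being centres, are commutative. Assertion (7) is then immediate: it merely rephrases the $\mathrm{Ad}_y$-invariance appearing in \Cref{centerBeru2} as being constant on conjugacy classes, so the real content lies in (1)--(6). Before transferring properties I would record two structural facts that the hypothesis $w \geq 1$ supplies. First, $\Z(L^1_w(G)) = L^1_{w,\mathrm{inv}}(G) \seq \Z(L^1(G)) \seq L^1(G)$ with continuous inclusions that are algebra homomorphisms (here $\|\cdot\|_1 \le \|\cdot\|_{1,w}$ because $w\ge 1$), so $\Z(L^1_w(G))$ injects continuously into the semisimple algebra $L^1(G)$ and is therefore itself semisimple. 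Second, again because $w \geq 1$, we have $\Z(\B) \seq L^1_w(G,\Z(A)) \seq L^1(G,\Z(A))$ continuously, which lets me compare $\Z(\B)$ with the unweighted vector-valued group algebra $L^1(G,\Z(A)) \cong L^1(G)\obp\Z(A)$.

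For the semisimplicity statement (1) I would argue both directions through the character space. Since every character of a projective tensor product of commutative Banach algebras factors as $\chi \ot \psi$, with $\chi$ a character of $\Z(L^1_w(G))$ and $\psi$ a character of $\Z(A)$, the element $\phi \ot a$ is annihilated by all characters whenever $0\neq a \in \mathrm{rad}(\Z(A))$ and $\phi\neq 0$; as $\phi\ot a\neq 0$ and the radical of a commutative Banach algebra is the intersection of the kernels of its characters, this yields the necessity direction. For sufficiency, if $\Z(A)$ is semisimple then $L^1(G,\Z(A))$ is semisimple, and since $\Z(\B)$ embeds continuously in it as a subalgebra, $\Z(\B)$ is semisimple too. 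The regularity statement (6) and the Tauberian statements (4),(5) I would handle the same way: combine the product description of the character space with the standard transfer theorems for regularity and for the Tauberian property of projective tensor products of commutative semisimple Banach algebras, the semisimplicity hypotheses in (4) (and the ambient semisimplicity secured above) guaranteeing that the Gelfand picture faithfully reflects the algebra. The weak amenability statements (2),(3) follow from the corresponding transfer results for weak amenability of projective tensor products of commutative Banach algebras; in (3) the extra semisimplicity hypothesis is precisely what lets one recover weak amenability of each factor, by realising it as a quotient (through a character $\psi$ of the other factor, via $\mathrm{id}\ot\psi$) of $\Z(\B)$, along which weak amenability descends.

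The main obstacle is the sufficiency half of these structural statements --- concretely, that semisimplicity (and the Tauberian property, regularity, weak amenability) of the two factors forces the same property on $\Z(L^1_w(G)) \obp \Z(A)$. Projective tensor products of commutative semisimple Banach algebras need not be semisimple in general, so the argument cannot be purely formal: it must exploit the special nature of the group factor, namely that $L^1(G)$ (and $\Z(L^1(G))$ for ${\bf [IN]}$ groups, by Mosak's results) is regular and Tauberian with a bounded approximate identity, together with the embedding $\Z(\B)\seq L^1(G,\Z(A))$ that the normalisation $w \geq 1$ provides. Verifying that each cited transfer theorem applies under exactly these hypotheses --- in particular keeping track of where $w \geq 1$, rather than merely ``constant on conjugacy classes,'' is genuinely needed --- is the step demanding the most care, whereas the necessity halves fall out formally from the product structure of the character space.
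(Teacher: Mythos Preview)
Your proposal is correct and runs along the same lines as the paper: reduce via \Cref{centerdistributewhenweightFC} to $\Z(L^1_w(G)) \obp \Z(A)$ and then invoke the standard transfer theorems for commutative projective tensor products (Dales for weak amenability in (2) and (3), Gelbaum and Tewari--Dutta--Madan for the Tauberian property in (4) and (5), Tomiyama for regularity in (6)), with (7) already contained in \Cref{centerdistributewhenweightFC}. Your treatment of (3) via the quotient maps $\mathrm{id}\ot\psi$ and $\chi\ot\mathrm{id}$ is exactly the paper's argument.

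The only substantive difference is in the sufficiency half of (1). You pass through the embedding $\Z(\B) \subseteq L^1(G,\Z(A))$ afforded by $w\ge 1$ and then quote semisimplicity of $L^1(G,\Z(A))$; the paper instead stays with $\Z(L^1_w(G))\obp\Z(A)$ and observes that $L^1_w(G)$ inherits the approximation property from $L^1(G)$ (via the Banach-space isomorphism $f\mapsto fw$), so that the canonical map $L^1_w(G)\obp A \to L^1_w(G)\oi A$ is injective, whence its restriction to $\Z(L^1_w(G))\obp\Z(A)$ is injective and \cite[Theorem~2.11.6]{kaniuthbook} applies. Both routes are valid; the paper's has the advantage of handling the tensor product directly without needing a separate semisimplicity result for $L^1(G,\Z(A))$, while yours makes more transparent why $w\ge 1$ (rather than just constancy on conjugacy classes) is being used.
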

\begin{proof}
(1) We know that $L^1(G)$ has approximation property \cite[page no. 325]{kaniuthbook}.
So, $L^1_w(G)$ has approximation property because the map $f \in L^1_w(G) \to fw \in L^1(G)$ is a Banach space isomorphism.
Hence, the natural map from $L^1_w(G) \obp A$ to $L^1_w(G) \oi A$, the Banach space injective tensor product, is injective \cite[Theorem A.2.12]{kaniuthbook}.
Thus, its restriction from $\Z\big(L^1_w(G) \obp A\big) = \Z(L^1_w(G)) \obp \Z(A)$  to $\Z(L^1_w(G)) \oi \Z(A) \seq L^1_w(G) \oi A$ is also injective from the injectivity of $\oi$.
Since $\Z(L^1_w(G))$ is a semisimple and commutative  \BA \ \cite[Corollary 2.3.7]{Rickart}, the result now follows from \cite[Theorem 2.11.6]{kaniuthbook}.

\smallskip
(2)  It follows from the fact that projective tensor product of weakly amenable commutative \BA s is weakly amenable (see \cite[Proposition 2.8.71]{Dales}).

\smallskip 
(3) Since $\Z(L^1_w(G))$ and $\Z(A)$ are semisimple, there exists multiplicative linear functionals $\phi_1$ and $\phi_2$ on $\Z(L^1_w(G))$ and $\Z(A)$, respectively, see \cite[Definition 2.1.9]{kaniuthbook}.
Then $\phi_1 \obp 1_{\Z(A)} : \Z(L^1_w(G)) \obp \Z(A) \to \Z(A)$ and $1_{\Z(L^1_w(G))} \ot \phi_2 : \Z(L^1_w(G)) \obp \Z(A) \to \Z(L^1_w(G))$ are surjective homomorphism and  \cite[Proposition 2.8.64]{Dales} gives the result.

\smallskip
(4) This follows from (1) and  \cite[Lemma 2.1]{Tewari}.

\smallskip
(5) Since both $\Z(L^1_w(G))$ and $\Z(A)$ are commutative, the result  follows from \cite[Theorem 1.~$\mathrm{P}_2$]{Gelbaum62}.

\smallskip
(6) It follows from \Cref{centerdistributewhenweightFC} and \cite[Theorem 3]{Tomiyama59}.

\smallskip
(7) This is same as saying that $\Z\big(\B\big) = L^1_{w,\mathrm{inv}}(G,\Z(A))$, which is proved in \Cref{centerdistributewhenweightFC}.
\end{proof}
\begin{remark}
Note that the hypothesis $w \geq 1$ is merely used to obtain semisimplicity of $\Z(L^1_w(G))$.
The results (2), (5), (6) and (7) are true if $w$ is constant on conjugacy classes.
\end{remark}

\end{document}